\documentclass[a4paper,reqno]{amsart}

\setlength{\textwidth}{\paperwidth} \addtolength{\textwidth}{-2in} \calclayout

\usepackage{parskip}

\usepackage{hyperref}%
\hypersetup{%
  colorlinks,%
  linkcolor={red!80!black},%
  citecolor={blue!80!black},%
  urlcolor={blue!80!black}%
}

\usepackage{tikz-cd}

\usepackage{enumerate}

\usepackage{thmtools}

\declaretheorem[style=theorem,numbered=no]{theorem}
\declaretheorem[style=remark,numbered=no]{remark}
\declaretheorem[style=remark,numbered=no,name=Example]{ex}

\usepackage{mathscinet}
\newcommand{\BGP}{Bern{\v{s}}te{\u\i}n--Gel{\cprime}fand--Ponomarev}

\newcommand{\kk}{\mathbf{k}}%
\newcommand{\EE}{\mathbb{E}}%
\renewcommand{\SS}{\mathbb{S}}%
\newcommand{\op}{\mathrm{op}}

\usepackage{eucal}

\newcommand{\cat}[1]{\mathcal{#1}}%
\newcommand{\C}{\cat{C}}%
\newcommand{\D}{\cat{D}}%
\renewcommand{\S}{\mathcal{S}}%

\DeclareMathOperator{\cofib}{cofib}
\DeclareMathOperator{\Mod}{\operatorname{Mod}}

\NewDocumentCommand{\DerCat}{s m}{%
  \IfBooleanTF{#1}{\operatorname{D}}{\mathcal{D}}\!\left(#2\right)%
}

\newcommand{\noloc}{\rotatebox[origin=c]{180}{$\colon$}}

\NewDocumentCommand{\UT}{s m m m}{%
  \IfBooleanTF{#1}{%
    \begin{smallmatrix}#2&#4\\0&#3\end{smallmatrix}%
  }{%
    \left(\begin{smallmatrix}#2&#4\\0&#3\end{smallmatrix}\right)%
  }%
}

\NewDocumentCommand{\Hom}{o m m o}{%
  \operatorname{Hom}\IfValueT{#1}{_{#1}}\IfValueT{#4}{^{#4}}\left(#2,#3\right)%
}

\NewDocumentCommand{\Ext}{o m m o}{%
  \operatorname{Ext}\IfValueT{#1}{_{#1}}\IfValueT{#4}{^{#4}}\left(#2,#3\right)%
}

\NewDocumentCommand{\Map}{o m m o}{%
  \operatorname{Map}\IfValueT{#1}{_{#1}}\IfValueT{#4}{^{#4}}\left(#2,#3\right)%
}

\NewDocumentCommand{\iMap}{o m m o}{%
  \underline{\operatorname{Map}}\IfValueT{#1}{_{#1}\!}\IfValueT{#4}{^{#4}}\left(#2,#3\right)%
}

\NewDocumentCommand{\Fun}{o m m o}{%
  \operatorname{Fun}\IfValueT{#1}{_{#1}}\IfValueT{#4}{^{#4}}\left(#2,#3\right)%
}

\NewDocumentCommand{\LFun}{o m m o}{%
  \operatorname{LFun}\IfValueT{#1}{_{#1}}\IfValueT{#4}{^{#4}}\left(#2,#3\right)%
}

\NewDocumentCommand{\RFun}{o m m o}{%
  \operatorname{RFun}\IfValueT{#1}{_{#1}}\IfValueT{#4}{^{#4}}\left(#2,#3\right)%
}

\NewDocumentCommand{\PrSt}{o}{\operatorname{PrSt}^{\mathrm{L}}\IfValueT{#1}{_{#1}}}

\NewDocumentCommand{\LaxLim}{s m}{%
  \mathcal{L}\IfBooleanTF{#1}{^*}{_*}\!\left(#2\right)%
}

\title[Derived equivalences between upper-triangular ring spectra]{Derived
  equivalences of upper-triangular ring spectra\\ via lax limits}

\author[G.~Jasso]{Gustavo Jasso}%
\address{%
  Lund University, %
  Centre for Mathematical Sciences, %
  Sölvegatan 18A, %
  22100 Lund, %
  Sweden%
}%
\email{gustavo.jasso@math.lu.se}%
\urladdr{https://gustavo.jasso.info}

\keywords{Upper-triangular matrix ring; derived equivalences; reflection
  functors; ring spectrum.}

\subjclass[2020]{18G80}

\begin{document}

\sloppy

\begin{abstract}
  We extend a theorem of Ladkani concerning derived
  equivalences between upper-triangular matrix rings to ring spectra. Our result
  also extends an analogous theorem of Maycock for differential graded algebras.
  We illustrate the main result with certain canonical equivalences determined
  by a smooth or proper ring spectrum.
\end{abstract}

\maketitle

The purpose of this short article is to extend the following theorem of
Ladkani~\cite{Lad11} from ordinary rings to ring spectra in the sense of stable
homotopy theory; we note that this theorem was extended to differential graded
algebras by Maycock~\cite{May11}. Recall that to rings $R$ and $S$ and an
$S$-$R$-bimodule $M$ one associates the upper-triangular matrix ring
\[
  \UT{S}{R}{M}=\left\{\left(\begin{smallmatrix}s&m\\0&r\end{smallmatrix}\right)\,\middle|\,r\in
    R,\ s\in S,\ m\in M \right\}
\]
with sum and product operations given the corresponding matrix operations. We
denote the (triangulated) derived category of right modules over a ring $R$ by
$\DerCat*{\Mod(R)}$ and recall than an object $X\in\DerCat*{\Mod(R)}$ is
\emph{compact} if the functor
\[
  \Hom[R]{X}{-}\colon\DerCat*{\Mod(R)}\longrightarrow\operatorname{Ab}
\]
preserves small coproducts.

\begin{theorem}[Ladkani]
  Let $R$ and $S$ be rings. Suppose given an $S$-$R$ bimodule $M$ such that
  $M_R$ is compact as an object of $\DerCat{\Mod(R)}$ and an $R$-module $T$ such
  that the functor
  \[
    -\otimes_E^{\mathbb{L}}T\colon\DerCat*{\Mod(E)}\stackrel{\sim}{\longrightarrow}\DerCat*{\Mod(R)}
  \]
  is an equivalence of triangulated categories, where $E=\Hom[R]{T}{T}$ is the
  ring of endomorphisms of $T$. Suppose, moreover, that $\Ext[R]{M}{T}[>0]=0$.
  Then, there is an equivalence of triangulated categories
  \[
    \DerCat*{\Mod\!\UT{S}{R}{M}}\simeq\DerCat*{\Mod\!\UT{E}{S}{\Hom[R]{M}{T}}}.
  \]
\end{theorem}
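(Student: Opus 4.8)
The plan is to present both sides as lax limits of colimit-preserving functors between module categories and then to match the two lax limits; this is where the title's machinery does the work. The first step is to record the derived enhancement of the comma-category description of modules over an upper-triangular ring: for a $B$-$A$-bimodule $L$ the functor $(-)\otimes_B^{\mathbb{L}}L\colon\DerCat*{\Mod(B)}\to\DerCat*{\Mod(A)}$ is a morphism in $\PrSt$, and $\DerCat*{\Mod\!\UT{B}{A}{L}}$ is its lax limit $\LaxLim{(-)\otimes_B^{\mathbb{L}}L}$, with objects the triples $(X,Y,\,X\otimes_B^{\mathbb{L}}L\to Y)$. Applying this on both sides reduces the theorem to an equivalence $\LaxLim{F}\simeq\LaxLim{G}$, where $F=(-)\otimes_S^{\mathbb{L}}M\colon\DerCat*{\Mod(S)}\to\DerCat*{\Mod(R)}$ and $G=(-)\otimes_E^{\mathbb{L}}N\colon\DerCat*{\Mod(E)}\to\DerCat*{\Mod(S)}$ with $N=\Hom[R]{M}{T}$.

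Next I would isolate two formal properties of lax limits in $\PrSt$. First, if a morphism $F$ admits a right adjoint $F^R$, then reindexing the comma data along the adjunction equivalence $\Map{FX}{Y}\simeq\Map{X}{F^R Y}$ identifies the lax limit of $F$ with the \emph{oplax} limit of $F^R$, that is $\LaxLim{F}\simeq\LaxLim*{F^R}$. Second, for \emph{any} morphism $H$ of stable presentable categories the oplax and lax limits coincide, $\LaxLim*{H}\simeq\LaxLim{H}$: stability lets one rotate triangles, sending $(c,d,\delta\colon d\to Hc)$ to $(c,\cofib\delta,\,Hc\to\cofib\delta)$, with inverse given by the fibre. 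Combining the two yields $\LaxLim{F}\simeq\LaxLim{F^R}$, replacing the gluing functor by its right adjoint $F^R=\iMap[R]{M}{-}\colon\DerCat*{\Mod(R)}\to\DerCat*{\Mod(S)}$; crucially $F^R$ is again colimit-preserving — hence a genuine morphism in $\PrSt$ — precisely because $M_R$ is compact.

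It then remains to compare $\LaxLim{F^R}$ with $\LaxLim{G}$. Because lax limits are invariant under precomposing the source with an equivalence, and $\Phi=(-)\otimes_E^{\mathbb{L}}T\colon\DerCat*{\Mod(E)}\xrightarrow{\sim}\DerCat*{\Mod(R)}$ is one by hypothesis, it suffices to produce a natural equivalence $F^R\circ\Phi\simeq G$. For $P\in\DerCat*{\Mod(E)}$ the projection formula — available because $M_R$ is compact, so that $\iMap[R]{M}{-}$ preserves colimits — gives $\iMap[R]{M}{P\otimes_E^{\mathbb{L}}T}\simeq P\otimes_E^{\mathbb{L}}\iMap[R]{M}{T}$, and the hypothesis $\Ext[R]{M}{T}[>0]=0$ collapses $\iMap[R]{M}{T}$ to the discrete bimodule $N=\Hom[R]{M}{T}$. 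Thus $F^R\circ\Phi\simeq(-)\otimes_E^{\mathbb{L}}N=G$, and chaining $\LaxLim{F}\simeq\LaxLim{F^R}\simeq\LaxLim{F^R\circ\Phi}\simeq\LaxLim{G}$ finishes the reduction, since its two ends are $\DerCat*{\Mod\!\UT{S}{R}{M}}$ and $\DerCat*{\Mod\!\UT{E}{S}{N}}$.

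The main obstacle is not the bookkeeping above but upgrading each step from objects to coherent equivalences in $\PrSt$. The rotation $\LaxLim*{H}\simeq\LaxLim{H}$ is transparent on objects yet must be produced as an equivalence of $\infty$-categories, and one must check that the projection-formula comparison is natural enough to intertwine the full lax-limit (gluing) datum, so that the composite is an equivalence of lax limits rather than a mere objectwise correspondence. I expect the cleanest route is to package all four moves as equivalences of the underlying diagrams $[1]\to\PrSt$ and to let functoriality of $\LaxLim{-}$ carry out the coherence automatically; the compactness of $M_R$ and the vanishing $\Ext[R]{M}{T}[>0]=0$ enter, respectively, to keep $F^R$ inside $\PrSt$ and to pin down the bimodule $N$.
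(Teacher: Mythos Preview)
Your proposal is correct and follows essentially the same route as the paper: identify both derived categories as lax limits, pass from $\LaxLim{F}$ to $\LaxLim{F^R}$ via the adjunction identification $\LaxLim{F}\simeq\LaxLim*{F^R}$ together with the cofibre rotation $\LaxLim*{H}\simeq\LaxLim{H}$, precompose with the equivalence $-\otimes_E^{\mathbb{L}}T$, and then use compactness of $M_R$ (via Eilenberg--Watts, which is your ``projection formula'') to identify $\iMap[R]{M}{-\otimes_E^{\mathbb{L}}T}$ with $-\otimes_E^{\mathbb{L}}N$; the Ext vanishing is used exactly as you say, to make $N$ discrete. The coherence concerns you flag at the end are handled in the paper by constructing each step as a genuine equivalence in $\PrSt[\kk]$, so your diagnosis of where the actual work lies is also accurate.
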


As Ladkani explains in \emph{loc.~cit.}, interesting equivalences of derived
categories are obtained from appropriate choices of $R$, $S$, $M$ and $T$. The
main focus of this article is to illustrate how formal properties of a
higher-categorical upper-triangular gluing construction yield a simple and
conceptual proof of (a vast generalisation of) the above theorem.

We use freely the theory of $\infty$-categories developed by Joyal, Lurie and
others; our main references are~\cite{Lur09,Lur17,Lur18SAG}. Here we only recall
that an $\infty$-category $\C$ is stable if it is pointed, admits finite
colimits and the suspension functor $\Sigma\colon\C\longrightarrow\C,\
X\longmapsto 0\amalg_X0,$ is an equivalence~\cite[Corollary~1.4.2.27]{Lur17}.
The homotopy category of a stable $\infty$-category is additive (in the usual
sense) and is canonically triangulated in the sense of
Verdier~\cite[Theorem~1.1.2.14]{Lur17}. Working with $\infty$-categories rather
than with triangulated categories permits us to construct the (homotopy) limit
of a diagram of exact functors between stable $\infty$-categories, a
construction that is not available in the realm of triangulated categories. We
also mention that the gluing construction that we utilise below is used by
Ladkani in~\cite{Lad11} to glue (abelian) module categories; notwithstanding,
our proof of the main theorem is different in the case of ordinary rings and of
differential graded algebras in that it does not rely on explicit computations.

Let $\kk$ be an $\EE_\infty$-ring spectrum, for example the sphere spectrum
$\SS$ or the Eilenberg--Mac Lane spectrum of an ordinary commutative
ring~\cite[Theorem~7.1.2.13]{Lur17}. The presentable stable $\infty$-category
$\DerCat{\kk}$ of $\kk$-module spectra is a (closed) symmetric monoidal
$\infty$-category~\cite[Proposition~7.1.2.7]{Lur17}. Below we work within the
symmetric monoidal $\infty$-category $\PrSt[\kk]$ of $\kk$-linear presentable
stable $\infty$-categories and $\kk$-linear colimit-preserving functors between
them~\cite[Variants~D.1.5.1 and D.2.3.3]{Lur18SAG}. Thus, an object of
$\PrSt[\kk]$ is a presentable (stable) $\infty$-category equipped with an action
of $\DerCat{\kk}$. The $\infty$-category $\PrSt[\kk]$ admits small limits and
these are preserved by the forgetful functor
$\PrSt[\kk]\to\operatorname{Pr}^{\mathrm{L}}$ to the $\infty$-category of
presentable $\infty$-categories and colimit-preserving functors between them,
see~\cite[Remark~D.1.6.4]{Lur18SAG} and~\cite[Corollary~4.2.3.3]{Lur17}. Limits
of presentable stable $\infty$-categories along colimit-preserving functors can
be computed using~\cite[Proposition 5.5.3.13 and Corollary 3.3.3.2]{Lur09} since
the limit of a diagram of stable $\infty$-categories and exact functors is
itself stable~\cite[Theorem~1.1.4.4]{Lur17}, see also~\cite[Propositions~1.1.4.1
and~4.8.2.18]{Lur17}.

Let $\C$ and $\D$ be $\kk$-linear presentable stable $\infty$-categories and
$F\colon\C\to\D$ a $\kk$-linear colimit-preserving functor. Define $\LaxLim{F}$
via the the pullback square
\[
  \begin{tikzcd}
    \LaxLim{F}\rar\dar\ar[phantom]{dr}[description,near start]{\lrcorner}&\Fun{\Delta^1}{\D}\dar{0^*}\\
    \C\rar[swap]{F}&\D
  \end{tikzcd}
\]
in the $\infty$-category $\PrSt[\kk]$; an object of the $\infty$-category
$\LaxLim{F}$ is a pair ${(c,f\colon F(c)\to d)}$ where $c\in\C$ and $f\colon
F(c)\to d$ is a morphism in $\D$. The above pullback is well defined since the
$\infty$-category $\Fun{\Delta^1}{\D}$ is
presentable~\cite[Proposition~5.5.3.6]{Lur09} and stable \cite[1.1.3.1]{Lur17}
and inherits a $\kk$-linear structure from $\D$ via the equivalence of
$\infty$-categories
\begin{align*}
  \Fun{\Delta^1}{\D}&\simeq\Fun{(\Delta^1)^\op}{\D^\op}^\op\\
                    &\simeq\LFun{\Fun{\Delta^1}{\S}}{\D^\op}^\op\\
                    &\simeq\RFun{\D^\op}{\Fun{\Delta^1}{\S}}\simeq\D\otimes\Fun{\Delta^1}{\S}.
\end{align*}
Above, $\S$ denotes the $\infty$-category of spaces, $\LFun{-}{-}$
(resp.~$\RFun{-}{-}$) denotes the $\infty$-category of functors that admit a
right adjoint (resp.~a left adjoint), and the symbol $\otimes$ denotes Lurie's
tensor product of presentable $\infty$-categories~\cite[Propositions~4.8.1.15
and 4.8.1.17]{Lur17} (see also~\cite[Theorem 5.1.5.6 and
Proposition~5.2.6.2]{Lur09}). Similarly, the restriction functor
\[
  0^*\colon\Fun{\Delta^1}{\D}\longrightarrow\Fun{\Delta^0}{\D}\simeq\D
\]
has a canonical $\kk$-linear structure. When the right adjoint $G\colon\D\to\C$
of $F$, which exists by~\cite[Corollary~5.5.2.9]{Lur09}, is also
colimit-preserving we may also form the pullback square
\[
  \begin{tikzcd}
    \LaxLim*{G}\rar\dar\ar[phantom]{dr}[description,near start]{\lrcorner}&\Fun{\Delta^1}{\C}\dar{1^*}\\
    \D\rar[swap]{G}&\C
  \end{tikzcd}
\]
in the $\infty$-category $\PrSt[\kk]$~\cite[Remark~D.1.5.3]{Lur18SAG}. There is
a canonical equivalence of $\kk$-linear presentable stable $\infty$-categories
\begin{equation}
  \label{eq:adjunction_eq}
  \LaxLim{F}\stackrel{\sim}{\longrightarrow}\LaxLim*{G},\qquad (c,f\colon
  F(c)\to d)\longmapsto(d,\overline{f}\colon c\to G(d)),
\end{equation}
stemming from the fact that both $\infty$-categories $\LaxLim{F}$ and
$\LaxLim*{G}$ are equivalent to the $\infty$-category of sections of the
biCartesian fibration over $\Delta^1$ classified by the adjunction $F\dashv G$,
see~\cite[Lemma 5.4.7.15]{Lur09}. We also remind the reader of the equivalence
of $\kk$-linear presentable stable $\infty$-categories~\cite[Lemma~1.3]{DJW21}
\begin{equation}
  \label{eq:BGP}
  \LaxLim*{F}\stackrel{\sim}{\longrightarrow}\LaxLim{F},\qquad(d,f\colon c\to F(d))\longmapsto(d,F(d)\to\cofib(f)),
\end{equation}
induced by the passage from a morphism to its cofibre, that we regard as a very
general version of the \BGP\ reflection functors~\cite{BGP73}. The gluing
operation $F\mapsto\LaxLim{F}$ is an example of a lax limit~\cite{GHN17} and is
also considered in the setting of differential graded categories, see for
example~\cite{KL15}.

For a given $\kk$-algebra spectrum $R$, that is an $\EE_1$-algebra object of the
symmetric monoidal $\infty$-category $\DerCat{\kk}$, we denote the $\kk$-linear
stable $\infty$-category of (right) $R$-module spectra by $\DerCat{R}$, see
also~\cite[Remark.~7.1.3.7]{Lur17}. The underlying stable $\infty$-category of
$\DerCat{R}$ is compactly generated by the regular representation of
$R$~\cite[Corollary~D.7.6.3]{Lur18SAG}. We identify the $\kk$-linear stable
$\infty$-category of \emph{left} $R$-module spectra with $\DerCat{R^\op}$, where
$R^\op$ denotes the opposite $\kk$-algebra spectrum
of~$R$~\cite[Remark~4.1.1.7]{Lur17}. If $M$ and $N$ are $R$-module spectra, we
denote by $\iMap[R]{M}{N}$ the $\kk$-module spectrum of morphisms ${M\to
  N}$~\cite[Example~D.7.1.2]{Lur18SAG}.

Let $R$ and $S$ be $\kk$-algebra spectra. We identify the $\infty$-category of
$S$-$R$-bimodule spectra with the $\infty$-category $\DerCat{S^\op\otimes_\kk
  R}$~\cite[Proposition~4.6.3.15]{Lur17}. The $\kk$-linear variant of the
Eilenberg--Watts Theorem~\cite[Proposition~7.1.2.4 and~p.~738]{Lur17} yields an
equivalence of $\kk$-linear presentable stable $\infty$-categories
\[
  \DerCat{S^\op\otimes_\kk
    R}\stackrel{\sim}{\longrightarrow}\LFun[\kk]{\DerCat{S}}{\DerCat{R}},\qquad
  M\longmapsto-\otimes_SM,
\]
where $\LFun[\kk]{\DerCat{S}}{\DerCat{R}}$ is the $\infty$-category of
$\kk$-linear colimit-preserving functors ${\DerCat{S}\to\DerCat{R}}$.

Given a bimodule spectrum $M\in\DerCat{S^\op\otimes_\kk R}$, we denote the right
adjoint to the tensor product functor $-\otimes_SM$ by $\iMap[R]{M}{-}$. We also
introduce the $\kk$-linear presentable stable $\infty$-category
\[
  \DerCat{\UT*{S}{R}{M}}=\LaxLim{-\otimes_SM}.
\]
The notation $\DerCat{\UT*{S}{R}{M}}$ is justified by the Recognition Theorem of
Schwede and Shipley~\cite[Corollary~D.7.6.3]{Lur18SAG} (see
also~\cite[Theorem~7.1.2.1]{Lur17}). Indeed, a standard argument using the
recollement
\[
  \begin{tikzcd}[column sep=large]
    \DerCat{R}%
    \ar[hookrightarrow]{r}[description]{i}%
    &\LaxLim{-\otimes_SM}%
    \ar[two heads]{r}[description]{p}%
    \ar[shift right=0.75em,two heads]{l}[swap]{i_L}%
    \ar[shift left=0.75em,two heads]{l}{i_R}%
    &\DerCat{S}%
    \ar[shift right=0.75em,hookrightarrow]{l}[swap]{p_L}%
    \ar[shift left=0.75em,hookrightarrow]{l}{p_R}
  \end{tikzcd}
\]
described in~\cite[Remark~1.4]{DJW21} shows that the object $X=i(R)\oplus
p_L(S)$ is a compact generator of the stable $\infty$-category
$\LaxLim{-\otimes_SM}$ whose $\kk$-algebra spectrum of endomorphisms decomposes
as the direct sum of $\kk$-module spectra
\begin{align*}
  S&\simeq\iMap{p_L(S)}{p_L(S)}&\iMap{i(R)}{p_L(S)}&\simeq M\\
  0&\simeq\iMap{p_L(S)}{i(R)}&\iMap{i(R)}{i(R)}&\simeq R,
\end{align*}
since $i_Rp_L(S)\simeq S\otimes_S M$. Upper-triangular ring spectra are
considered for example in~\cite{Sos22}.

We are ready to state and prove the main result in this article.

\begin{theorem}
  Let $R$, $S$ and $E$ be $\kk$-algebra spectra. Suppose given a bimodule
  spectrum ${M\in\DerCat{S^\op\otimes_\kk R}}$ such that the $R$-module spectrum
  $M_R=S\otimes_S M$ is compact and a bimodule spectrum
  $T\in\DerCat{E^\op\otimes_\kk R}$ such that the functor
  \[
    -\otimes_E T\colon\DerCat{E}\stackrel{\sim}{\longrightarrow}\DerCat{R}
  \]
  is an equivalence. Then, there is an equivalence of $\kk$-linear presentable
  stable $\infty$-categories
  \[
    \DerCat{\UT*{S}{R}{M}}\simeq\DerCat{\UT*{E}{S}{N}},
  \]
  where $N=\iMap[R]{M}{T}$.
\end{theorem}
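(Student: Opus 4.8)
The plan is to pass from one gluing datum to the other through a short chain of the equivalences recorded above, using the hypothesis on $T$ only to transport the source of an intermediate functor. Write $F=-\otimes_SM\colon\DerCat{S}\to\DerCat{R}$, so that $\DerCat{\UT*{S}{R}{M}}=\LaxLim{F}$ by definition, and let $G=\iMap[R]{M}{-}\colon\DerCat{R}\to\DerCat{S}$ denote its right adjoint. The key point is that the hypothesis that $M_R=S\otimes_SM$ be compact forces $G$ to be colimit-preserving: as a right adjoint it is already exact, so it suffices that it preserve coproducts, and this is exactly the compactness of $M_R$. In particular both $\LaxLim*{G}$ and $\LaxLim{G}$ are defined in $\PrSt[\kk]$ and the equivalences~\eqref{eq:adjunction_eq} and~\eqref{eq:BGP} are available.

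First I would apply the adjunction equivalence~\eqref{eq:adjunction_eq} to obtain $\LaxLim{F}\simeq\LaxLim*{G}$, followed by the reflection equivalence~\eqref{eq:BGP}, applied to the colimit-preserving functor $G$, to obtain $\LaxLim*{G}\simeq\LaxLim{G}$. The gluing datum is now the functor $G\colon\DerCat{R}\to\DerCat{S}$, and I would use the given equivalence $\Phi=-\otimes_ET\colon\DerCat{E}\xrightarrow{\sim}\DerCat{R}$ to replace its source. Since $\LaxLim{-}$ is defined by a pullback and $\Phi$ is an equivalence, precomposition by $\Phi$ yields a levelwise equivalence of the defining cospans and hence an equivalence $\LaxLim{G}\simeq\LaxLim{G\circ\Phi}$, where now $G\circ\Phi\colon\DerCat{E}\to\DerCat{S}$.

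It then remains to recognise $G\circ\Phi$ as $-\otimes_EN$. Being a composite of $\kk$-linear colimit-preserving functors, $G\circ\Phi$ is itself such, so the Eilenberg--Watts equivalence presents it canonically as $-\otimes_EP$ for the $E$-$S$-bimodule $P=(G\circ\Phi)(E)$ obtained by evaluating at the regular representation. Since $\Phi(E)=E\otimes_ET\simeq T$, one computes $P\simeq G(T)=\iMap[R]{M}{T}=N$, the left $E$-action being the one inherited from $T$. Concatenating the equivalences yields
\[
  \DerCat{\UT*{S}{R}{M}}=\LaxLim{F}\simeq\LaxLim*{G}\simeq\LaxLim{G}\simeq\LaxLim{G\circ\Phi}\simeq\LaxLim{-\otimes_EN}=\DerCat{\UT*{E}{S}{N}}.
\]

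I expect the one genuinely non-formal point to be the Eilenberg--Watts identification of $G\circ\Phi$ with $-\otimes_EN$: one must check that evaluation at the regular representation recovers $N=\iMap[R]{M}{T}$ together with its correct $E$-$S$-bimodule structure --- in particular that the left $E$-action transported from $T$ through $\Phi$ and the $R$-linear functor $G$ agrees with the native one on $N$ --- rather than merely an equivalence of underlying $S$-module spectra. Everything else, including the colimit-preservation of $G$ that licenses~\eqref{eq:adjunction_eq},~\eqref{eq:BGP} and the Eilenberg--Watts step, is a formal consequence of the compactness of $M_R$ and of the recorded equivalences.
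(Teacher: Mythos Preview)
Your proposal is correct and follows essentially the same route as the paper: the chain $\LaxLim{F}\simeq\LaxLim*{G}\simeq\LaxLim{G}\simeq\LaxLim{G\circ\Phi}\simeq\LaxLim{-\otimes_EN}$ matches the paper's chain step for step, with your ``precomposition by the equivalence~$\Phi$'' being exactly the commutative-square argument the paper uses for~\eqref{eq:functoriality}, and your Eilenberg--Watts identification of $G\circ\Phi$ being~\eqref{eq:aux}. Your flagged caveat about the bimodule structure on $N$ is not addressed more explicitly in the paper than in your sketch.
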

\begin{proof}
  The commutative square
  \[
    \begin{tikzcd}[column sep=8em]
      \DerCat{E}\rar{\iMap[R]{M}{-\otimes_ET}}\dar[swap]{-\otimes_ET}&\DerCat{S}\dar[equals]\\
      \DerCat{R}\rar{\iMap[R]{M}{-}}&\DerCat{S}
    \end{tikzcd}
  \]
  in which the left vertical functor is an equivalence by assumption, induces an
  equivalence of $\kk$-linear presentable stable $\infty$-categories
  \begin{equation}
    \label{eq:functoriality}
    \LaxLim{\iMap[R]{M}{-}}\simeq\LaxLim{\iMap[R]{M}{-\otimes_ET}}.
  \end{equation}
  Since $\DerCat{S}$ is generated under filtered colimits by the compact
  $S$-modules~\cite[Definition~7.2.4.1 and~Proposition~7.2.4.2]{Lur17}, the
  assumption that the $R$-module spectrum $M_R=S\otimes_S M$ is compact is
  equivalent to the requirement that the (exact) functor
  \[
    \iMap[R]{M}{-}\colon\DerCat{R}\longrightarrow\DerCat{S}
  \]
  preserves small colimits~\cite[Proposition~1.1.4.1 and~1.4.4.1]{Lur17}. Hence,
  in view of the Eilenberg--Watts Theorem, the $\kk$-linear colimit-preserving
  functors
  \begin{align*}
    \iMap[R]{M}{-\otimes_ET}\colon\DerCat{E}\longrightarrow\DerCat{S}\quad\text{and}\quad-\otimes_E\iMap[R]{M}{T}\colon\DerCat{E}\longrightarrow\DerCat{S}
  \end{align*}
  are equivalent. Consequently, there is an equivalence of $\kk$-linear
  presentable stable $\infty$-categories
  \begin{equation}
    \label{eq:aux}
    \LaxLim{\iMap[R]{M}{-\otimes_ET}}\simeq\LaxLim{-\otimes_E\iMap[R]{M}{T}}.
  \end{equation}
  We conclude the proof by considering the following composite of equivalences
  of $\kk$-linear presentable stable $\infty$-categories (recall that
  $N=\iMap[R]{M}{T}$):
  \begin{align*}
    \DerCat{\UT*{S}{R}{M}}&=\LaxLim{-\otimes_SM}\\
                         &\stackrel{\eqref{eq:adjunction_eq}}{\simeq}\LaxLim*{\iMap[R]{M}{-}}\\
                         &\stackrel{\eqref{eq:BGP}}{\simeq}\LaxLim{\iMap[R]{M}{-}}\\
                         &\stackrel{\eqref{eq:functoriality}}{\simeq}\LaxLim{\iMap[R]{M}{-\otimes_ET}}\\
                         &\stackrel{\eqref{eq:aux}}{\simeq}\LaxLim{-\otimes_E\iMap[R]{M}{T}}=\DerCat{\UT*{E}{S}{N}}.\qedhere
  \end{align*}
\end{proof}

\begin{remark}
  When $\kk$ is the Eilenberg--Mac Lane spectrum of the ordinary ring of integer
  numbers, Ladkani's theorem is recovered from the previous theorem by
  considering the case where the underlying spectra of $R$, $S$, $M$ and $T$ are
  discrete, that is their stable homotopy groups vanish in non-zero degrees. The
  assumptions in Ladkani's theorem are sufficient to guarantee that the
  upper-triangular ring spectra in the statement in the previous theorem are
  both discrete. Ladkani's theorem then follows from the fact that the
  $\infty$-category of module spectra over a discrete ring spectrum $A$ is
  equivalent to the derived $\infty$-category of modules over the ordinary ring
  $\pi_0(A)$, see~\cite[Remark~7.1.1.16]{Lur17}. Maycock's extension of
  Ladkani's theorem to differential graded algebras corresponds to the case
  where $\kk$ is the Eilenberg--Mac Lane spectrum of an ordinary commutative
  ring, see~\cite[Proposition 7.1.4.6]{Lur17}.
\end{remark}

\begin{ex}
  Let $R=S=E$ be arbitrary $\kk$-algebra spectra and $M=T=R$ with its canonical
  $R$-bimodule structure. The functors ${-\otimes_RR}$ and
  $-\otimes_R\iMap[R]{R}R$ are both equivalent to the identity functor of
  $\DerCat{R}$ and the equivalence in the main theorem reduces to the
  (non-trivial) equivalence of $\kk$-linear presentable stable
  $\infty$-categories
  \[
    \DerCat{\UT*{R}{R}{R}}\simeq\Fun{\Delta^1}{\DerCat{R}}\stackrel{\sim}{\longrightarrow}\Fun{\Delta^1}{\DerCat{R}}\simeq\DerCat{\UT*{R}{R}{R}}
  \]
  given by the passage from a morphism in $\DerCat{R}$ to its cofibre.
\end{ex}

We conclude this article by describing certain canonical equivalences attached
to an algebra spectrum (or, more generally, a morphism between such) that
satisfies suitable finiteness/dualisability conditions. The bimodule spectra
that arise play a central role in the study of right/left Calabi--Yau
structures~\cite{Gin06,KS09} and their relative variants~\cite{Toe14,BD19},
see~\cite{Kel11,Yeu16,BD21,BCS23+,KW23,Wu23,Wu23a}. Given a $\kk$-algebra
spectrum $A$, we write $A^e=A\otimes_\kk A^\op$ and recall that $A$ can be
viewed either as a right or as a left $A^e$-module
spectrum~\cite[Construction~4.6.3.7 and Remark~4.6.3.8]{Lur17}. We also make
implicit use of the canonical equivalences between the $\kk$-linear
$\infty$-category of $A$-bimodule spectra and those of $A^e$-$\kk$-bimodule
spectra and of $\kk$-$A^e$-bimodule spectra,
see~\cite[Proposition~4.6.3.15]{Lur17} and the discussing succeeding it.

\begin{enumerate}[(i)]
\item\label{it:proper} Let $A$ be a proper $\kk$-algebra spectrum, that is the
  underlying $\kk$-module spectrum of $A$ is compact; equivalently, $A$ is a
  right dualisable object of the $\infty$-category of $A^e$-$\kk$-bimodule
  spectra, see~\cite[Definition~4.6.4.2]{Lur17} and~\cite[Example~D.7.4.2 and
  Remark~D.7.4.3]{Lur18SAG}. We write
  \[
    DA=\iMap[\kk]{A}{\kk}
  \]
  for the $\kk$-linear dual of $A$. Setting $R=E=\kk$, $S=A^e$, $M=A$ and
  $T=\kk$, the main theorem affords an equivalence of $\kk$-linear presentable
  stable $\infty$-categories
  \[
    \DerCat{\UT*{A^e}{\kk}{A}}\stackrel{\sim}{\longrightarrow}\DerCat{\UT*{\kk}{A^e}{DA}}
  \]
  between the derived $\infty$-category of the `one-point extension' of $A^e$ by
  the diagonal $A$-bimodule spectrum and that of the `one-point coextension' of
  $A^e$ by $DA$ (this terminology originates in representation theory of
  algebras~\cite{Rin84}).
\item\label{it:smooth} Let $A$ be a smooth $\kk$-algebra spectrum, that is
  $A\in\DerCat{A^e}$ is a compact object~\cite[Definition~11.3.2.1]{Lur18SAG};
  equivalently, $A$ is a left dualisable object of the
  $\infty$-category of $A^e$-$\kk$-bimodule spectra,
  see~\cite[Definition~4.6.4.13]{Lur17} and~\cite[Remark~11.3.2.2]{Lur18SAG}.
  The $A$-bimodule spectrum
  \[
    \Omega_A=\iMap[A^e]{A}{A^e}
  \]
  is called the inverse dualising $A$-bimodule (not to be confused with the
  based-loops functor on $\DerCat{A}$). Setting $R=E=A^e$, $S=\kk$, $M=A$ and
  $T=A^e$, the main theorem yields an equivalence of $\kk$-linear presentable
  stable $\infty$-categories
  \[
    \DerCat{\UT*{\kk}{A^e}{A}}\stackrel{\sim}{\longrightarrow}\DerCat{\UT*{A^e}{\kk}{\Omega_A}}.
  \]
\item Let $A$ be a smooth and proper $\kk$-algebra spectrum. In this case there
  are mutually-inverse equivalences of $\kk$-linear presentable stable
  $\infty$-categories
  \[
    -\otimes_A\Omega_A\colon\DerCat{A}\stackrel{\sim}{\longleftrightarrow}\DerCat{A}\noloc-\otimes_ADA,
  \]
  see~\cite[Proposition 4.6.4.20]{Lur17} where $DA$ is called the Serre
  $A$-bimodule~\cite[Definition 4.6.4.5]{Lur17} and $\Omega_A$ is called the
  dual Serre $A$-bimodule~\cite[Definition 4.6.4.16]{Lur17} (the fact that $DA$
  and $\Omega_A$ are the right and left duals of $A$ in the $\infty$-category of
  $A^e$-$\kk$-bimodule spectra in the sense of~\cite[Definition~4.6.2.3]{Lur17}
  follows from~\cite[Proposition~4.6.2.1 and Remark~4.6.2.2]{Lur17}). Setting
  $R=E=S=A$, $M=A$ and $T=DA$ or $T=\Omega_A$, the main theorem provides
  equivalences of $\kk$-linear presentable stable $\infty$-categories
  \[
    \DerCat{\UT*{A}{A}{A}}\stackrel{\sim}{\longrightarrow}\DerCat{\UT*{A}{A}{DA}}\qquad\text{and}\qquad\DerCat{\UT*{A}{A}{A}}\stackrel{\sim}{\longrightarrow}\DerCat{\UT*{A}{A}{\Omega_A}},
  \]
  where we use that $\iMap[A]{A}{DA}\simeq DA$ and $\iMap[A]{A}{\Omega_A}\simeq
  \Omega_A$ as $A$-bimodule spectra.
\item Let $f\colon B\to A$ be a morphism of $\kk$-algebra spectra that is not
  necessarily unital. By the Eilenberg--Watts Theorem, the counit of the induced
  adjunction
  \[
    -\otimes_B A\simeq f_!\colon\DerCat{B}\longleftrightarrow\DerCat{A}\noloc
    f^*
  \]
  can be interpreted as a morphism of $A$-bimodule spectra
  \[
    \varepsilon\colon A\otimes_BA\longrightarrow A.
  \]
  Suppose that $A$ is smooth and that $f^*(A)$ is compact as a $B$-module
  spectrum, so that the source and target of the morphism $\varepsilon$ are
  compact $A$-bimodule spectra and, consequently, so is its cofibre. The
  $A$-bimodule spectrum
  \[
    \Omega_{A,B}=\iMap[A^e]{\cofib(\varepsilon)}{A^e}
  \]
  is called the relative inverse dualising $A$-bimodule~\cite{Yeu16}. Setting
  $R=E=A^e$, $S=\kk$, $M=\cofib(\varepsilon)$ and $T=A^e$, the main theorem
  yields an equivalence of $\kk$-linear presentable stable $\infty$-categories
  \[
    \DerCat{\UT*{\kk}{A^e}{\cofib(\varepsilon)}}\stackrel{\sim}{\longrightarrow}\DerCat{\UT*{A^e}{\kk}{\Omega_{A,B}}}
  \]
  that specialises to the equivalence in \ref{it:smooth} when $B=0$.
\end{enumerate}

\subsection*{Acknowledgements}

The main result in this article was presented as part of a lecture series
delivered during the conference `Two Weeks of Silting' that took place in
Stuttgart, Germany, in August 2019; the author is grateful to the organisers for
the opportunity of speaking at the conference. The author thanks Peter
J{\o}rgensen for informing him of Maycock's article~\cite{May11} as well as the
anonymous referee who suggested to include further applications of the main
theorem. The author's research was supported by the Deutsche
Forschungsgemeinschaft (German Research Foundation) under Germany's Excellence
Strategy – GZ 2047/1, Projekt- ID 390685813, and partially supported by the
Swedish Research Council (Vetenskapsrådet) Research Project Grant `Higher
structures in higher-dimensional homological algebra.'

\bibliographystyle{alpha}%
\bibliography{library}

\end{document}